\newtheorem{theorem}{Theorem}
\theoremstyle{plain}
\newtheorem{definition}{Definition}
\numberwithin{equation}{section}
\begin{document}
\title[Tribonacci Quaternions]{On some Properties of Tribonacci Quaternions}
\author{Ilker Akkus}
\address[I. Akkus and G. K\i z\i laslan]{K\i r\i kkale University,
Department of Mathematics, Faculty of Science and Arts, 71450 K\i r\i kkale
Turkey}
\email[I. Akkus]{iakkus.tr@gmail.com}
\author{Gonca K\i z\i laslan}
\email[G. K\i z\i laslan]{goncakizilaslan@gmail.com}
\subjclass[2000]{11B37, 11R52, 11Y55}
\keywords{Recurrences, Quaternion, Integer sequences}
\thanks{This paper is in final form and no version of it will be submitted
for publication elsewhere.}

\begin{abstract}
In this paper, we give some properties of the Tribonacci and
Tribonacci-Lucas quaternions and obtain some identities for them.
\end{abstract}

\maketitle

\section{Introduction}

Quaternions are fundamental objects of various parts of mathematics. They
have applications in both theoretical and applied mathematics such as group
theory, computer science and even also physics, see \cite{girard,kou,startek}%
. Let $\mathcal{H}$ be the real division quaternion algebra. A natural basis
of this algebra is formed by the elements $\mathbf{1},$ $\mathbf{i},$ $%
\mathbf{j},$ $\mathbf{k}$ where $\mathbf{i}^{2}=\mathbf{j}^{2}=\mathbf{k}%
^{2}=\mathbf{ijk}=-1$. So all quaternions are of the form%
\begin{equation*}
q=a_{0}+\mathbf{i}a_{1}+\mathbf{j}a_{2}+\mathbf{k}a_{3},
\end{equation*}%
where the coefficients $a_{n}$, $0\leq n\leq 3$ are all real. The
multiplication table for the basis of $\mathcal{H}$ is%
\begin{equation*}
\begin{tabular}{|r||r|r|r|r|}
\hline
$\cdot $ & $\mathbf{1}$ & $\mathbf{i}$ & $\mathbf{j}$ & $\mathbf{k}$ \\ 
\hline\hline
$\mathbf{1}$ & $\mathbf{1}$ & $\mathbf{i}$ & $\mathbf{j}$ & $\mathbf{k}$ \\ 
\hline
$\mathbf{i}$ & $\mathbf{i}$ & $-\mathbf{1}$ & $\mathbf{k}$ & $\mathbf{-j}$
\\ \hline
$\mathbf{j}$ & $\mathbf{j}$ & $\mathbf{-k}$ & $-\mathbf{1}$ & $\mathbf{i}$
\\ \hline
$\mathbf{k}$ & $\mathbf{k}$ & $\mathbf{j}$ & $\mathbf{-i}$ & $-\mathbf{1}$
\\ \hline
\end{tabular}%
\text{ }.
\end{equation*}%
Every $q\in \mathcal{H}$ can be simply written as $q=\func{Re}q+\func{Im}q$,
where $\func{Re}q=a_{0}$ and $\func{Im}q=\mathbf{i}a_{1}+\mathbf{j}a_{2}+%
\mathbf{k}a_{3}$ are called the real and imaginary parts, respectively. The
conjugate of the real quaternion $q$ is the quaternion denoted by $q^{\ast
}, $ and%
\begin{equation*}
q^{\ast }=\func{Re}q-\func{Im}q.
\end{equation*}%
This operation satisfies 
\begin{equation*}
(q^{\ast })^{\ast }=q,\text{ }(q_{1}+q_{2})^{\ast }=q_{1}^{\ast
}+q_{2}^{\ast },\text{ }(q_{1}q_{2})^{\ast }=q_{2}^{\ast }q_{1}^{\ast }
\end{equation*}%
for all $q_{1},$ $q_{2}\in \mathcal{H}.$ Also, any quaternion $q\in $ $%
\mathcal{H}$ can be written as 
\begin{equation*}
q=q_{1}+\mathbf{j}q_{2},
\end{equation*}%
where $q_{1},$ $q_{2}\in 
%TCIMACRO{\U{2102} }%
%BeginExpansion
\mathbb{C}
%EndExpansion
.$ The addition and multiplication of any two quaternions,

\begin{equation*}
q=q_{1}+\mathbf{j}q_{2},q^{\prime }=q_{1}^{\prime }+\mathbf{j}q_{2}^{\prime
},
\end{equation*}
are defined by%
\begin{equation*}
q+q^{\prime }=\left( q_{1}+q_{1}^{\prime }\right) +\mathbf{j(}%
q_{2}+q_{2}^{\prime })
\end{equation*}%
and%
\begin{equation*}
qq^{\prime }=[q_{1}q_{1}^{\prime }-(q_{2}^{\prime })^{\ast }q_{2}]+\mathbf{j}%
[(q_{2}^{\prime })^{\ast }q_{1}^{\ast }+q_{2}^{\ast }q_{1}^{\prime }].
\end{equation*}

\bigskip The norm of the quaternion $q$ is defined by 
\begin{equation*}
N(q)=q\overline{q}.
\end{equation*}%
Thus the inverse of a nonzero quaternion $q$ is given by 
\begin{equation*}
q^{-1}=\frac{\overline{q}}{N(q)}.
\end{equation*}%
For all $p,q\in \mathcal{H}$, we have 
\begin{eqnarray*}
N(pq) &=&N(p)N(q), \\
\left( pq\right) ^{-1} &=&q^{-1}p^{-1}.
\end{eqnarray*}

There are various types of quaternion sequences which are determined by
their components taken from different types of sequences and they have been
studied by many researchers. One of the well-known sequence is given by see 
\cite{Horadam1}. In \cite{Horadam1}, Horadam defined the $n^{th}$ Fibonacci
and Lucas quaternions as the quaternions whose components are Fibonacci and
Lucas numbers respectively. After that several authors were interested in
these structures and obtained some results, see \cite%
{Halici,Horadam2,Iakin,Iakin2,Iyer,Koshy,Lin}. Recently Cerda-Morales
considered the generalized Tribonacci sequence $\left\{ V_{n}\right\}
_{n\geq 0}$ defined by 
\begin{equation*}
V_{n}=rV_{n-1}+sV_{n-2}+tV_{n-3},\text{ }n\geq 3
\end{equation*}%
where $r,s,t$ are real numbers and $V_{0}=a,$ $V_{1}=b,$ $V_{2}=c$ are
arbitrary integers, see \cite{cerda}. For $r=s=t=1$ and $V_{0}=0,$ $V_{1}=1,$
$V_{2}=1$, the sequence $\left\{ V_{n}\right\} _{n\geq 0}$ is the well-known
Tribonacci sequence denoted by $\{T_{n}\}_{n},$ see \cite{Feinberg,Feng,
Kilic}. For $r=s=t=1$ and $V_{0}=3,$ $V_{1}=1,$ $V_{2}=3$, we obtain the
Tribonacci-Lucas sequence $\{K_{n}\}_{n},$ see \cite{Taskara}. The first few
Tribonacci numbers and Tribonacci Lucas numbers are given in the following
table:\ \ 
\begin{equation*}
\begin{tabular}{|r|r|r|r|r|r|r|r|r|r|r|r|r|r|r|}
\hline
$n$ & $0$ & $1$ & $2$ & $3$ & $4$ & $5$ & $6$ & $7$ & $8$ & $9$ & $10$ & $11$
& $12$ & $13$ \\ \hline
$T_{n}$ & $0$ & $1$ & $1$ & $2$ & $4$ & $7$ & $13$ & $24$ & $44$ & $81$ & $%
149$ & $274$ & $504$ & $927$ \\ \hline
$K_{n}$ & $3$ & $1$ & $3$ & $7$ & $11$ & $21$ & $39$ & $71$ & $131$ & $241$
& $443$ & $815$ & $1499$ & $2757$ \\ \hline
\end{tabular}%
\end{equation*}

The function 
\begin{equation*}
f(x)=a_{0}+a_{1}x+a_{2}x^{2}+\cdots +a_{n}x^{n}+\cdots
\end{equation*}%
is called the generating function for the sequence $\{a_{0},a_{1},a_{2},%
\ldots \}.$ The generating functions of the Tribonacci sequence $%
\{T_{n}\}_{n}$ \ and the Tribonacci-Lucas sequence $\{K_{n}\}_{n}$ are%
\begin{eqnarray*}
f(x) &=&\frac{x}{1-x-x^{2}-x^{3}}, \\
h(x) &=&\frac{3-2x-x^{2}}{1-x-x^{2}-x^{3}},
\end{eqnarray*}%
respectively. The Binet formulas of $T_{n}$ and $K_{n}$ are given as 
\begin{eqnarray}
T_{n} &=&\frac{\alpha ^{n+1}}{(\alpha -\beta )(\alpha -\gamma )}+\frac{\beta
^{n+1}}{(\beta -\alpha )(\beta -\gamma )}+\frac{\gamma ^{n+1}}{(\gamma
-\alpha )(\gamma -\beta )},  \TCItag{1} \\
K_{n} &=&\alpha ^{n}+\beta ^{n}+\gamma ^{n},  \notag
\end{eqnarray}%
respectively, where 
\begin{eqnarray*}
\alpha &=&\frac{1+\sqrt[3]{19+3\sqrt{33}}+\sqrt[3]{19-3\sqrt{33}}}{3} \\
\beta &=&\frac{1+w\sqrt[3]{19+3\sqrt{33}}+w^{2}\sqrt[3]{19-3\sqrt{33}}}{3} \\
\gamma &=&\frac{1+w^{2}\sqrt[3]{19+3\sqrt{33}}+w\sqrt[3]{19-3\sqrt{33}}}{3}%
,(w=\frac{-1+i\sqrt{3}}{2}),
\end{eqnarray*}%
see \cite{Taskara}.

In \cite{cerda}, we see a new type of quaternion whose coefficients are
generalized Tribonacci numbers as follows, 
\begin{equation*}
Q_{v,n}=V_{n}+V_{n+1}\mathbf{i}+V_{n+2}\mathbf{j}+V_{n+3}\mathbf{k},\text{ }%
n\geq 0.
\end{equation*}%
In this paper, we are interested in the quaternions with Tribonacci number
and Tribonacci-Lucas number components denoted by $Q_{n}$ and $\tilde{Q}_{n}$%
, respectively. We give some properties of these quaternions and obtain some
identities for them.

\section{Quaternions with Tribonacci Number Components}

For $n\geq 0,$ the $n^{\text{th}}$ Tribonacci quaternion $Q_{n}$ and $n^{%
\text{th}}$ Tribonacci-Lucas quaternion $\tilde{Q}_{n}$ are defined by 
\begin{equation*}
Q_{n}=T_{n}+\mathbf{i}T_{n+1}+\mathbf{j}T_{n+2}+\mathbf{k}T_{n+3}
\end{equation*}%
and 
\begin{equation*}
\tilde{Q}_{n}=K_{n}+\mathbf{i}K_{n+1}+\mathbf{j}K_{n+2}+\mathbf{k}K_{n+3}
\end{equation*}%
respectively, where $T_{n}$ and $K_{n}$ are $n^{\text{th}}$ Tribonacci and
Tribonacci-Lucas numbers.

Note that for $n\geq 0,$ 
\begin{equation*}
Q_{n+3}=Q_{n+2}+Q_{n+1}+Q_{n},
\end{equation*}%
and%
\begin{equation*}
\tilde{Q}_{n+3}=\tilde{Q}_{n+2}+\tilde{Q}_{n+1}+\tilde{Q}_{n}.
\end{equation*}%
The conjugate of the Tribonacci quaternion $Q_{n}$ is denoted by $%
Q_{n}^{\ast }$ and 
\begin{equation*}
Q_{n}^{\ast }=T_{n}-\mathbf{i}T_{n+1}-\mathbf{j}T_{n+2}-\mathbf{k}T_{n+3},
\end{equation*}%
and the conjugate of the Tribonacci-Lucas quaternion $\tilde{Q}_{n}$ is
denoted by $\tilde{Q}_{n}^{\ast }$ and 
\begin{equation*}
\tilde{Q}_{n}^{\ast }=K_{n}-\mathbf{i}K_{n+1}-\mathbf{j}K_{n+2}-\mathbf{k}%
K_{n+3}.
\end{equation*}%
Let $f(x)$ be a series in powers of $x$. Then by the symbol $[x^{n}]f(x)$ we
will mean the coefficient of $x^{n}$ in the series $f(x)$. Hence the norm of
the quaternion $Q_{n}$ is%
\begin{equation*}
Q_{n}Q_{n}^{\ast }=\dsum\limits_{i=0}^{3}T_{n+i}^{2}=[x^{n}]\frac{%
2(3+5x+4x^{2}-2x^{3}-x^{4}-x^{5})}{\left( 1-3x-x^{2}-x^{3}\right) \left(
1+x+x^{2}-x^{3}\right) }.
\end{equation*}

For $n\geq 2,$ let $A_{n}=T_{-n}$ and $B_{n}=K_{-n}.$ Then Tribonacci and
Tribonacci-Lucas sequences with negative indices are defined by the
following equations, see \cite{Taskara};%
\begin{eqnarray*}
A_{n} &=&-A_{n-1}-A_{n-2}+A_{n-3}\text{ \ \ \ }(A_{-1}=1,\text{ }%
A_{0}=A_{1}=0) \\
B_{n} &=&-B_{n-1}-B_{n-2}+B_{n-3}\text{ \ \ \ }(B_{-1}=1,\text{ }B_{0}=3,%
\text{ }B_{1}=-1).
\end{eqnarray*}%
Hence we can give the following definition.

\begin{definition}
The Tribonacci and Tribonacci-Lucas quaternions with negative subscripts are
defined by%
\begin{eqnarray*}
Q_{-n} &=&A_{n}+\mathbf{i}A_{n-1}+\mathbf{j}A_{n-2}+\mathbf{k}A_{n-3} \\
\tilde{Q}_{-n} &=&B_{n}+\mathbf{i}B_{n-1}+\mathbf{j}B_{n-2}+\mathbf{k}%
B_{n-3}.
\end{eqnarray*}
\end{definition}

The generating function and Binet formula for generalized Tribonacci
quaternions are given in \cite{cerda}. For the completeness of the paper, we
give the generating function and Binet formula for the Tribonacci
quaternions.

\begin{theorem}
The generating function for the Tribonacci quaternion $Q_{n}$ is 
\begin{equation*}
G(x)=\frac{x+\mathbf{i}+\mathbf{j}(1+x+x^{2})+\mathbf{k}(2+2x+x^{2})}{%
1-x-x^{2}-x^{3}}.
\end{equation*}
\end{theorem}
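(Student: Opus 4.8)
The plan is to compute the generating function directly from its definition as a formal power series in the quaternion-valued sequence $\{Q_n\}_{n\geq 0}$. Writing $G(x)=\sum_{n\geq 0}Q_n x^n$, the strategy is to exploit the recurrence $Q_{n+3}=Q_{n+2}+Q_{n+1}+Q_n$ together with the known initial quaternions $Q_0,Q_1,Q_2$, which are assembled from the Tribonacci values $T_0=0,T_1=1,T_2=1,T_3=2,T_4=4$ recorded in the table.

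First I would multiply $G(x)$ by $x$, $x^2$, and $x^3$ and form the combination $(1-x-x^2-x^3)G(x)$. When the coefficient of a general power $x^n$ (for $n\geq 3$) is extracted, the recurrence $Q_n-Q_{n-1}-Q_{n-2}-Q_{n-3}=0$ guarantees that every such term vanishes, so the product collapses to a polynomial determined entirely by the low-order terms. Concretely, $(1-x-x^2-x^3)G(x)=Q_0+(Q_1-Q_0)x+(Q_2-Q_1-Q_0)x^2$.

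The second step is to evaluate these three coefficient quaternions. I would substitute
\begin{eqnarray*}
Q_0 &=& \mathbf{i}+\mathbf{j}+2\mathbf{k},\\
Q_1 &=& 1+\mathbf{i}+2\mathbf{j}+4\mathbf{k},\\
Q_2 &=& 1+2\mathbf{i}+4\mathbf{j}+7\mathbf{k},
\end{eqnarray*}
and then compute $Q_1-Q_0$ and $Q_2-Q_1-Q_0$ componentwise, collecting the $\mathbf{1},\mathbf{i},\mathbf{j},\mathbf{k}$ parts separately. Grouping the resulting polynomial numerator by basis element should reproduce exactly $x+\mathbf{i}+\mathbf{j}(1+x+x^2)+\mathbf{k}(2+2x+x^2)$, after which dividing by $1-x-x^2-x^3$ yields the claimed formula for $G(x)$.

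The only real subtlety is bookkeeping rather than conceptual: one must keep the quaternion coefficients ordered consistently and verify that the scalar part of the numerator is $x$ (not a constant) and that the $\mathbf{i}$-part is the constant $1$, which forces careful attention to the $Q_2-Q_1-Q_0$ term where partial cancellations occur. I expect this coefficient-matching to be the main place an error could slip in, so I would double-check each of the four components of the degree-$2$ coefficient against the recurrence values. No convergence issues arise because the identity is purely formal in $x$.
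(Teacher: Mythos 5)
Your proposal is correct and follows essentially the same route as the paper: forming $(1-x-x^{2}-x^{3})G(x)$, collapsing all terms of degree $\geq 3$ via the recurrence $Q_{n+3}=Q_{n+2}+Q_{n+1}+Q_{n}$, and arriving at the numerator $Q_{0}+x(Q_{1}-Q_{0})+x^{2}(Q_{2}-Q_{1}-Q_{0})$, which is exactly the paper's argument. Your explicit componentwise evaluation of $Q_{0},Q_{1},Q_{2}$ (indeed $Q_{1}-Q_{0}=1+\mathbf{j}+2\mathbf{k}$ and $Q_{2}-Q_{1}-Q_{0}=\mathbf{j}+\mathbf{k}$, giving the stated numerator) is a step the paper leaves implicit, but it is the same proof.
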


\begin{proof}
Let 
\begin{equation*}
G(x)=Q_{0}+Q_{1}x+Q_{2}x^{2}+\cdots +Q_{n}x^{n}+\cdots
\end{equation*}%
be the generating function of the Tribonacci quaternion $Q_{n}.$ Since the
orders of $Q_{n-1}$, $Q_{n-2}$ and $Q_{n-3}$ are $1$, $2$ and $3$ less than
the order of $Q_{n},$ respectively, find $xG\left( x\right) $, $x^{2}G\left(
x\right) $ and $x^{3}G(x):$

\begin{eqnarray*}
xG\left( x\right) &=&Q_{0}x+Q_{1}x^{2}+Q_{2}x^{3}+\cdots +Q_{n-1}x^{n}+\cdots
\\
x^{2}G\left( x\right) &=&Q_{0}x^{2}+Q_{1}x^{3}+Q_{2}x^{4}+\cdots
+Q_{n-2}x^{n}+\cdots . \\
x^{3}G\left( x\right) &=&Q_{0}x^{3}+Q_{1}x^{4}+Q_{2}x^{5}+\cdots
+Q_{n-3}x^{n}+\cdots .
\end{eqnarray*}%
Thus 
\begin{equation*}
G(x)=\frac{Q_{0}+x(Q_{1}-Q_{0})+x^{2}(Q_{2}-Q_{1}-Q_{0})}{1-x-x^{2}-x^{3}},
\end{equation*}%
and so 
\begin{equation*}
G(x)=\frac{x+\mathbf{i}+\mathbf{j}(1+x+x^{2})+\mathbf{k}(2+2x+x^{2})}{%
1-x-x^{2}-x^{3}}.
\end{equation*}
\end{proof}

\begin{theorem}
The Binet formulas for the Tribonacci and Tribonacci-Lucas quaternions are
given by%
\begin{eqnarray*}
Q_{n} &=&\frac{\alpha ^{n+1}}{(\alpha -\beta )(\alpha -\gamma )}\underline{%
\alpha }+\frac{\beta ^{n+1}}{(\beta -\alpha )(\beta -\gamma )}\underline{%
\beta }+\frac{\gamma ^{n+1}}{(\gamma -\alpha )(\gamma -\beta )}\underline{%
\gamma } \\
\tilde{Q}_{n} &=&\alpha ^{n}\underline{\alpha }+\beta ^{n}\underline{\beta }%
+\gamma ^{n}\underline{\gamma }
\end{eqnarray*}%
where $\underline{\alpha }=1+\mathbf{i}\alpha +\mathbf{j}\alpha ^{2}+\mathbf{%
k}\alpha ^{3},\underline{\beta }=1+\mathbf{i}\beta +\mathbf{j}\beta ^{2}+%
\mathbf{k}\beta ^{3}$ and $\underline{\gamma }=1+\mathbf{i}\gamma +\mathbf{j}%
\gamma ^{2}+\mathbf{k}\gamma ^{3}.$
\end{theorem}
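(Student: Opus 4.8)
The plan is to obtain each formula directly from the scalar Binet formulas in (1) by substituting them componentwise into the definitions of $Q_{n}$ and $\tilde{Q}_{n}$, and then regrouping the resulting sum according to the three roots $\alpha,\beta,\gamma$. The crucial structural observation is that in the definition $Q_{n}=T_{n}+\mathbf{i}T_{n+1}+\mathbf{j}T_{n+2}+\mathbf{k}T_{n+3}$ the quaternion units $\mathbf{1},\mathbf{i},\mathbf{j},\mathbf{k}$ are attached to four \emph{consecutive} indices, and each scalar $T_{n+m}$ contributes a power $\alpha^{n+m+1}$ (and similarly for $\beta,\gamma$). Hence after factoring out the common power $\alpha^{n+1}$ the leftover factor is exactly $1+\mathbf{i}\alpha+\mathbf{j}\alpha^{2}+\mathbf{k}\alpha^{3}=\underline{\alpha}$, which is precisely what produces the stated closed form.

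I would treat the Tribonacci-Lucas case first, as it is cleaner. Writing $K_{m}=\alpha^{m}+\beta^{m}+\gamma^{m}$ and inserting into $\tilde{Q}_{n}$ gives a sum of twelve terms; collecting the four terms carrying $\alpha^{n},\alpha^{n+1},\alpha^{n+2},\alpha^{n+3}$ yields $\alpha^{n}(1+\mathbf{i}\alpha+\mathbf{j}\alpha^{2}+\mathbf{k}\alpha^{3})=\alpha^{n}\underline{\alpha}$, and likewise for $\beta$ and $\gamma$. Summing the three groups gives $\tilde{Q}_{n}=\alpha^{n}\underline{\alpha}+\beta^{n}\underline{\beta}+\gamma^{n}\underline{\gamma}$.

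For $Q_{n}$ I would proceed identically, abbreviating the Binet coefficients as, say, $c_{\alpha}=\frac{1}{(\alpha-\beta)(\alpha-\gamma)}$ together with its cyclic analogues $c_{\beta},c_{\gamma}$, so that $T_{m}=c_{\alpha}\alpha^{m+1}+c_{\beta}\beta^{m+1}+c_{\gamma}\gamma^{m+1}$. Substituting the four components $T_{n},\dots,T_{n+3}$ and collecting the $\alpha$-terms produces $c_{\alpha}\alpha^{n+1}(1+\mathbf{i}\alpha+\mathbf{j}\alpha^{2}+\mathbf{k}\alpha^{3})=c_{\alpha}\alpha^{n+1}\underline{\alpha}$, with the $\beta$- and $\gamma$-terms giving the analogous contributions. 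Restoring the values of $c_{\alpha},c_{\beta},c_{\gamma}$ reproduces the claimed formula for $Q_{n}$.

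There is essentially no hard step here: the whole argument is substitution followed by regrouping, and no quaternion multiplication is ever invoked, since the units $\mathbf{1},\mathbf{i},\mathbf{j},\mathbf{k}$ remain formal coefficients throughout. The only point requiring care is the bookkeeping of the exponent shifts---keeping track that the base power to factor out is $\alpha^{n+1}$ for the Tribonacci quaternion (because (1) writes $T_{n}$ in terms of $\alpha^{n+1}$) but $\alpha^{n}$ for the Tribonacci-Lucas quaternion---so that the factored power and the residual $\underline{\alpha}$ emerge exactly as stated.
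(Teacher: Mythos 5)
Your proposal is correct and follows essentially the same route as the paper: the paper's proof likewise consists of substituting the scalar Binet formulas from $(1)$ into the definitions of $Q_{n}$ and $\tilde{Q}_{n}$ and regrouping by the roots $\alpha ,\beta ,\gamma $, though the paper states this in one line while you spell out the exponent bookkeeping and the factoring that produces $\underline{\alpha },\underline{\beta },\underline{\gamma }$. Your version is, if anything, more complete than the published proof.
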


\begin{proof}
Using the Binet formulas for $T_{n}$ and $K_{n}$ given in $(1)$ and the
definition of $Q_{n}$ and $\tilde{Q}_{n},$ we obtain the Binet's formula for 
$Q_{n}$ and $\tilde{Q}_{n}$ as follows, 
\begin{eqnarray*}
Q_{n} &=&\frac{\alpha ^{n+1}}{(\alpha -\beta )(\alpha -\gamma )}\underline{%
\alpha }+\frac{\beta ^{n+1}}{(\beta -\alpha )(\beta -\gamma )}\underline{%
\beta }+\frac{\gamma ^{n+1}}{(\gamma -\alpha )(\gamma -\beta )}\underline{%
\gamma } \\
\tilde{Q}_{n} &=&\alpha ^{n}\underline{\alpha }+\beta ^{n}\underline{\beta }%
+\gamma ^{n}\underline{\gamma }.
\end{eqnarray*}
\end{proof}

\section{Some Identities on Tribonacci Quaternions}

\subsection{Identities 1.}

\begin{equation*}
Q_{n}^{2}=2T_{n}Q_{n}-Q_{n}Q_{n}^{\ast },
\end{equation*}%
\begin{equation*}
Q_{n}+Q_{n}^{\ast }=2T_{n},
\end{equation*}%
\begin{equation*}
\tilde{Q}_{n}=Q_{n}+2Q_{n-1}+3Q_{n-2}.
\end{equation*}

\subsection{Identities 2.}

\begin{eqnarray*}
Q_{m+n} &=&Q_{m}K_{n}-Q_{m-n}C_{n}+Q_{m-2n}, \\
\tilde{Q}_{m+n} &=&\tilde{Q}_{m}K_{n}-\tilde{Q}_{m-n}C_{n}+\underline{C}%
_{2n-m},
\end{eqnarray*}%
where 
\begin{equation*}
C_{n}=\alpha ^{n}\beta ^{n}+\alpha ^{n}\gamma ^{n}+\beta ^{n}\gamma ^{n}
\end{equation*}
and 
\begin{equation*}
\underline{C}_{2n-m}=C_{2n-m}+\mathbf{i}C_{2n-m-1}+\mathbf{j}C_{2n-m-2}+%
\mathbf{k}C_{2n-m-3}.
\end{equation*}%
Another identity can be given as

\begin{equation*}
Q_{n+2m}=K_{m}Q_{n+m}-K_{-m}Q_{n}+Q_{n-2m}.
\end{equation*}

\subsection{Identity 3.}

For $n\geq 0$, $m\geq 3$ we have%
\begin{equation*}
Q_{n+m}=T_{m-2}Q_{n}+(T_{m-3}+T_{m-2})Q_{n+1}+T_{m-1}Q_{n+2}.
\end{equation*}

\subsection{Identities 4.}

Let $\widehat{S}_{n}=\sum\limits_{k=0}^{n}Q_{k}.$ Then we have 
\begin{equation*}
Q_{n}=\frac{1}{2}%
\begin{bmatrix}
\widehat{S}_{n}-\widehat{S}_{n-4}%
\end{bmatrix}%
\end{equation*}%
and for $n\geq 0$, $m\geq 5$ we have%
\begin{equation*}
\widehat{S}_{n+m}=-S_{m-3}\widehat{S}_{n}-S_{m-4}\widehat{S}_{n+1}-S_{m-5}%
\widehat{S}_{n+2}+S_{m-2}\widehat{S}_{n+3},
\end{equation*}%
where $S_{m}=\sum\limits_{k=0}^{m}T_{k}.$

\subsection{\protect\bigskip Identity 5.}

\QTP{Body Math}
\begin{equation*}
(Q_{n}Q_{n+4})^{2}+(2(Q_{n+1}+Q_{n+2})Q_{n+3})^{2}=(Q_{n}^{2}+2(Q_{n+1}+Q_{n+2})Q_{n+3})^{2}.
\end{equation*}

\QTP{Body Math}
$\bigskip $

\subsection{\protect\bigskip Identities 6.}

Let 
\begin{eqnarray*}
R_{n} &=&3T_{n+1}-T_{n}\text{ \ \ \ }(n\geq 0) \\
\tilde{R}_{n} &=&R_{n}+\mathbf{i}R_{n+1}+\mathbf{j}R_{n+2}+\mathbf{k}R_{n+3}
\end{eqnarray*}%
and 
\begin{eqnarray*}
U_{n} &=&T_{n-1}+T_{n-2}\text{ \ \ }(U_{0}=U_{1}=0,\text{ }n\geq 2) \\
\tilde{U}_{n} &=&U_{n}+\mathbf{i}U_{n+1}+\mathbf{j}U_{n+2}+\mathbf{k}U_{n+3}.
\end{eqnarray*}

Then we have%
\begin{equation}
\tilde{R}_{n+3}=\tilde{R}_{n+2}+\tilde{R}_{n+1}+\tilde{R}_{n},  \tag{1}
\end{equation}%
and%
\begin{equation*}
\tilde{U}_{n+3}=\tilde{U}_{n+2}+\tilde{U}_{n+1}+\tilde{U}_{n}.
\end{equation*}%
We also obtain the following identities: 
\begin{equation*}
Q_{n}^{2}-Q_{n-1}^{2}=\tilde{U}_{n+1}\tilde{U}_{n-1}\text{ \ \ }(n\geq 2).
\end{equation*}

\begin{equation*}
\tilde{U}_{n+1}^{2}+\tilde{U}_{n-1}^{2}=2(Q_{n-1}^{2}+Q_{n}^{2})\text{ \ \ }%
(n\geq 2).
\end{equation*}

\subsection{\protect\bigskip Identities 7.}

Now we will give some identities about the finite sum of various quaternions
that we obtained.%
\begin{equation*}
\sum\limits_{k=0}^{n}Q_{k}=\frac{Q_{n+2}+Q_{n}+Q_{0}-Q_{2}}{2}
\end{equation*}%
\begin{equation*}
\sum\limits_{k=0}^{n}Q_{2k}=\frac{Q_{2n+1}+Q_{2n}-(1+\mathbf{j}+2\mathbf{k})%
}{2}
\end{equation*}%
\begin{equation*}
\sum\limits_{k=0}^{n}Q_{2k+1}=\frac{Q_{2n+2}+Q_{2n+1}-(\mathbf{i+}2\mathbf{j}%
+3\mathbf{k})}{2}
\end{equation*}

\begin{eqnarray*}
\sum\limits_{k=0}^{n}Q_{3k} &=&\sum\limits_{k=0}^{3n-1}Q_{k}+Q_{0} \\
&=&\frac{Q_{3n+2}-Q_{3n}-(1-\mathbf{i}+\mathbf{j}+\mathbf{k})}{2}
\end{eqnarray*}%
\begin{equation*}
\sum\limits_{k=0}^{n}Q_{4k}=\frac{Q_{4n+2}+Q_{4n}-(1-\mathbf{i}+\mathbf{j}+%
\mathbf{k})}{4}
\end{equation*}

We also have 
\begin{equation*}
\sum\limits_{k=0}^{n}\tilde{U}_{n}=Q_{n+1}-(1+\mathbf{i}+\mathbf{j}+2\mathbf{%
k}).
\end{equation*}

\begin{equation*}
\sum\limits_{k=1}^{n}\tilde{Q}_{n}=2\tilde{U}_{n+2}+\tilde{U}_{n}-(3+4%
\mathbf{i}+7\mathbf{j}+14\mathbf{k})
\end{equation*}

\begin{equation*}
\sum\limits_{k=0}^{n}Q_{k}=\frac{\tilde{U}_{n+2}+\tilde{U}_{n+1}-(1+\mathbf{i%
}+3\mathbf{j}+5\mathbf{k})}{2}
\end{equation*}

\begin{equation*}
\sum\limits_{k=0}^{n}\tilde{R}_{k}=\frac{3\tilde{U}_{n+3}+2\tilde{U}_{n+2}-%
\tilde{U}_{n+1}-(2+8\mathbf{i}+12\mathbf{j}+22\mathbf{k})}{2}.
\end{equation*}

\begin{equation*}
\sum\limits_{k=0}^{n}\tilde{U}_{3k}=Q_{3n}-\mathbf{i.}
\end{equation*}

\begin{equation*}
\sum\limits_{k=0}^{n}\tilde{U}_{3k+1}=Q_{3n+1}-(1\mathbf{+k}).
\end{equation*}

\section{Proofs}

In order to keep this paper within reasonable length, we restricted
ourselves to a short selection. Thus we prove some identities using the
Binet formulae and mathematical induction. The other identities can be
proved similarly to the proofs.

\subsection{Proof of the Identities 1:}

We will give the proof of identity 
\begin{equation*}
Q_{n}^{2}=2T_{n}Q_{n}-Q_{n}Q_{n}^{\ast }.
\end{equation*}

We have 
\begin{equation*}
Q_{n}^{2}=T_{n}^{2}-T_{n+1}^{2}-T_{n+2}^{2}-T_{n+3}^{2}+2(\mathbf{i}%
T_{n}T_{n+1}+\mathbf{j}T_{n}T_{n+2}+\mathbf{k}T_{n}T_{n+3}).
\end{equation*}%
On the other hand since 
\begin{equation*}
Q_{n}Q_{n}^{\ast }=T_{n}^{2}+T_{n+1}^{2}+T_{n+2}^{2}+T_{n+3}^{2}
\end{equation*}%
and 
\begin{equation*}
2T_{n}Q_{n}=2T_{n}^{2}+2(\mathbf{i}T_{n}T_{n+1}+\mathbf{j}T_{n}T_{n+2}+%
\mathbf{k}T_{n}T_{n+3}),
\end{equation*}%
we get the result.

Now we will prove the identity%
\begin{equation*}
\tilde{Q}_{n}=Q_{n}+2Q_{n-1}+3Q_{n-2}.
\end{equation*}%
The Binet formula of the Tribonacci quaternion is given as%
\begin{equation*}
Q_{n}=\frac{\alpha ^{n+1}}{(\alpha -\beta )(\alpha -\gamma )}\underline{%
\alpha }+\frac{\beta ^{n+1}}{(\beta -\alpha )(\beta -\gamma )}\underline{%
\beta }+\frac{\gamma ^{n+1}}{(\gamma -\alpha )(\gamma -\beta )}\underline{%
\gamma }.
\end{equation*}%
Then we have 
\begin{eqnarray*}
Q_{n}+2Q_{n-1}+3Q_{n-2} &=&%
\begin{bmatrix}
\frac{\alpha ^{n+1}}{(\alpha -\beta )(\alpha -\gamma )}\underline{\alpha }+%
\frac{\beta ^{n+1}}{(\beta -\alpha )(\beta -\gamma )}\underline{\beta }+%
\frac{\gamma ^{n+1}}{(\gamma -\alpha )(\gamma -\beta )}\underline{\gamma }%
\end{bmatrix}%
+ \\
&&2%
\begin{bmatrix}
\frac{\alpha ^{n}}{(\alpha -\beta )(\alpha -\gamma )}\underline{\alpha }+%
\frac{\beta ^{n}}{(\beta -\alpha )(\beta -\gamma )}\underline{\beta }+\frac{%
\gamma ^{n}}{(\gamma -\alpha )(\gamma -\beta )}\underline{\gamma }%
\end{bmatrix}%
+ \\
&&3%
\begin{bmatrix}
\frac{\alpha ^{n-1}}{(\alpha -\beta )(\alpha -\gamma )}\underline{\alpha }+%
\frac{\beta ^{n-1}}{(\beta -\alpha )(\beta -\gamma )}\underline{\beta }+%
\frac{\gamma ^{n-1}}{(\gamma -\alpha )(\gamma -\beta )}\underline{\gamma }%
\end{bmatrix}
\\
&=&%
\begin{bmatrix}
\frac{\alpha ^{n+1}+2\alpha ^{n}+3\alpha ^{n-1}}{(\alpha -\beta )(\alpha
-\gamma )}%
\end{bmatrix}%
\underline{\alpha }+%
\begin{bmatrix}
\frac{\beta ^{n+1}+2\beta ^{n}+3\beta ^{n-1}}{(\beta -\alpha )(\beta -\gamma
)}%
\end{bmatrix}%
\underline{\beta }+ \\
&&%
\begin{bmatrix}
\frac{\gamma ^{n+1}+2\gamma ^{n}+3\gamma ^{n-1}}{(\gamma -\alpha )(\gamma
-\beta )}%
\end{bmatrix}%
\underline{\gamma } \\
&=&\alpha ^{n}%
\begin{bmatrix}
\frac{\alpha ^{2}+2\alpha +3}{\alpha (\alpha -\beta )(\alpha -\gamma )}%
\end{bmatrix}%
\underline{\alpha }+\beta ^{n}%
\begin{bmatrix}
\frac{\beta ^{2}+2\beta +3}{\beta (\beta -\alpha )(\beta -\gamma )}%
\end{bmatrix}%
\underline{\beta }+ \\
&&\gamma ^{n}%
\begin{bmatrix}
\frac{\gamma ^{2}+2\gamma +3}{\gamma (\gamma -\alpha )(\gamma -\beta )}%
\end{bmatrix}%
\underline{\gamma } \\
&=&\alpha ^{n}\underline{\alpha }+\beta ^{n}\underline{\beta }+\gamma ^{n}%
\underline{\gamma } \\
&=&\tilde{Q}_{n}.
\end{eqnarray*}

\subsection{Proof of the Identities 2:}

It is known that the Tribonacci numbers and Tribonacci-Lucas numbers satisfy
the equalities,%
\begin{eqnarray*}
T_{m+n} &=&T_{m}K_{n}-T_{m-n}C_{n}+T_{m-2n}, \\
K_{m+n} &=&K_{m}K_{n}-K_{m-n}C_{n}+C_{2n-m},
\end{eqnarray*}%
see \cite{Taskara}. Then we have 
\begin{eqnarray*}
Q_{m+n} &=&T_{m+n}+\mathbf{i}T_{m+n+1}+\mathbf{j}T_{m+n+2}+\mathbf{k}%
T_{m+n+3} \\
&=&(T_{m}K_{n}-T_{m-n}C_{n}+T_{m-2n})+\mathbf{i}%
(T_{m+1}K_{n}-T_{m+1-n}C_{n}+T_{m+1-2n})+ \\
&&\mathbf{j}(T_{m+2}K_{n}-T_{m+2-n}C_{n}+T_{m+2-2n})+ \\
&&\mathbf{k}(T_{m+3}K_{n}-T_{m+3-n}C_{n}+T_{m+3-2n}) \\
&=&(T_{m}+\mathbf{i}T_{m+1}+\mathbf{j}T_{m+2}+\mathbf{k}T_{m+3})K_{n}- \\
&&(T_{m-n}+\mathbf{i}T_{m-n+1}+\mathbf{j}T_{m-n+2}+\mathbf{k}T_{m-n+3})C_{n}+
\\
&&(T_{m-2n}+\mathbf{i}T_{m-2n+1}+\mathbf{j}T_{m-2n+2}+\mathbf{k}T_{m-2n+3})
\\
&=&Q_{m}K_{n}-Q_{m-n}C_{n}+Q_{m-2n}.
\end{eqnarray*}

For all $n$ and $m,$ Tribonacci and Tribonacci-Lucas sequences also satisfy
the following equality,%
\begin{equation*}
T_{n+2m}=K_{m}T_{n+m}-K_{-m}T_{n}+T_{n-2m},
\end{equation*}%
see \cite{Howard}. Similarly we obtain the identity%
\begin{equation*}
Q_{n+2m}=K_{m}Q_{n+m}-K_{-m}Q_{n}+Q_{n-2m}.
\end{equation*}

\subsection{Proof of the Identity 3:}

For $m=3,$ we have%
\begin{eqnarray*}
Q_{n+3} &=&Q_{n}+Q_{n+1}+Q_{n+2} \\
&=&T_{1}Q_{n}+(T_{0}+T_{1})Q_{n+1}+T_{2}Q_{n+2}.
\end{eqnarray*}%
Suppose the equality holds for all $m\leq k.$ For $m=k+1,$ we have 
\begin{eqnarray*}
Q_{n+k+1} &=&Q_{n+k}+Q_{n+k-1}+Q_{n+k-2} \\
&=&T_{k-2}Q_{n}+(T_{k-3}+T_{k-2})Q_{n+1}+T_{k-1}Q_{n+2}+T_{k-3}Q_{n}+ \\
&&(T_{k-4}+T_{k-3})Q_{n+1}+T_{k-2}Q_{n+2}+T_{k-4}Q_{n}+(T_{k-5}+T_{k-4})Q_{n+1}+T_{k-3}Q_{n+2}
\\
&=&(T_{k-2}+T_{k-3}+T_{k-4})Q_{n}+ \\
&&(T_{k-3}+T_{k-2}+T_{k-4}+T_{k-3}+T_{k-5}+T_{k-4})Q_{n+1}+ \\
&&(T_{k-1}+T_{k-2}+T_{k-3})Q_{n+2} \\
&=&T_{k-1}Q_{n}+(T_{k-2}+T_{k-1})Q_{n+1}+T_{k}Q_{n+2}.
\end{eqnarray*}

By induction on $m,$ we get the result.

\subsection{Proof of the Identities 4:}

Since 
\begin{eqnarray*}
\widehat{S}_{n} &=&Q_{0}+Q_{1}+\cdots +Q_{n} \\
&=&Q_{0}+Q_{1}+\cdots +Q_{n-4}+Q_{n-3}+Q_{n-2}+Q_{n-1}+Q_{n} \\
&=&\widehat{S}_{n-4}+Q_{n}+Q_{n} \\
&=&\widehat{S}_{n-4}+2Q_{n},
\end{eqnarray*}%
we obtain that 
\begin{equation*}
Q_{n}=\frac{1}{2}%
\begin{bmatrix}
\widehat{S}_{n}-\widehat{S}_{n-4}%
\end{bmatrix}%
.
\end{equation*}%
For the other identity the proof will be done by induction on $n$ and $m.$
First we will prove the identity%
\begin{equation*}
\widehat{S}_{n+5}=-2\widehat{S}_{n}-\widehat{S}_{n+1}+4\widehat{S}_{n+3}.
\end{equation*}

For $n=0,$ we have 
\begin{eqnarray*}
\widehat{S}_{5} &=&Q_{0}+Q_{1}+Q_{2}+Q_{3}+Q_{4}+Q_{5} \\
&=&Q_{0}+Q_{1}+Q_{2}+Q_{3}+(Q_{1}+Q_{2}+Q_{3})+(Q_{2}+Q_{3}+Q_{4}) \\
&=&Q_{0}+Q_{1}+Q_{2}+Q_{3}+(Q_{1}+Q_{2}+Q_{3})+(Q_{2}+Q_{3}+Q_{1}+Q_{2}+Q_{3})
\\
&=&Q_{0}+3Q_{1}+4Q_{2}+4Q_{3} \\
&=&-2Q_{0}-Q_{0}-Q_{1}+4(Q_{0}+Q_{1}+Q_{2}+Q_{3}) \\
&=&-2\widehat{S}_{0}-\widehat{S}_{1}+4\widehat{S}_{3}.
\end{eqnarray*}

Suppose the equality holds for $n=k,$ that is, 
\begin{equation*}
\widehat{S}_{k+5}=-2\widehat{S}_{k}-\widehat{S}_{k+1}+4\widehat{S}_{k+3}.
\end{equation*}%
For $n=k+1,$ we have 
\begin{eqnarray*}
\widehat{S}_{k+6} &=&\widehat{S}_{k+5}+Q_{k+6} \\
&=&-2\widehat{S}_{k}-\widehat{S}_{k+1}+4\widehat{S}_{k+3}+Q_{k+6} \\
&=&-2\widehat{S}_{k}-\widehat{S}_{k+1}+4\widehat{S}%
_{k+3}+(Q_{k+3}+Q_{k+4}+Q_{k+5}) \\
&=&-2\widehat{S}_{k}-\widehat{S}_{k+1}+4\widehat{S}%
_{k+3}+(Q_{k+4}-Q_{k+2}-Q_{k+1}+Q_{k+4}+Q_{k+4}-Q_{k+2}-Q_{k+1}+Q_{k+4}) \\
&=&-2\widehat{S}_{k+1}-\widehat{S}_{k+2}+4\widehat{S}_{k+4}.
\end{eqnarray*}

So the equality holds for all $n\geq 0.$\bigskip

For $m=5,$ we have%
\begin{eqnarray*}
\widehat{S}_{n+5} &=&-2\widehat{S}_{k+1}-\widehat{S}_{k+2}+4\widehat{S}%
_{k+4}. \\
&=&-S_{2}\widehat{S}_{n}-S_{1}\widehat{S}_{n+1}-S_{0}\widehat{S}_{n+2}+S_{3}%
\widehat{S}_{n+3}.
\end{eqnarray*}%
Suppose the equality holds for $m=r,$ that is 
\begin{equation*}
\widehat{S}_{n+r}=-S_{r-3}\widehat{S}_{n}-S_{r-4}\widehat{S}_{n+1}-S_{r-5}%
\widehat{S}_{n+2}+S_{r-2}\widehat{S}_{n+3}.
\end{equation*}

For $m=r+1,$%
\begin{eqnarray*}
\widehat{S}_{n+r+1} &=&\widehat{S}_{n+r}+Q_{n+r+1} \\
&=&-S_{r-3}\widehat{S}_{n}-S_{r-4}\widehat{S}_{n+1}-S_{r-5}\widehat{S}%
_{n+2}+S_{r-2}\widehat{S}_{n+3}+(Q_{n+r-2}+Q_{n+r-1}+Q_{n+r}) \\
&=&-S_{r-2}\widehat{S}_{n}-S_{r-3}\widehat{S}_{n+1}-S_{r-4}\widehat{S}%
_{n+2}+S_{r-1}\widehat{S}_{n+3}.
\end{eqnarray*}

By induction on $m,$ we get the result.

\subsection{Proof of the Identity 5:}

We have%
\begin{eqnarray*}
Q_{n}^{2} &=&(Q_{n+3}-(Q_{n+1}+Q_{n+2}))^{2} \\
&=&Q_{n+3}^{2}+(Q_{n+1}+Q_{n+2})^{2}-2(Q_{n+1}+Q_{n+2})Q_{n+3}
\end{eqnarray*}%
and this gives 
\begin{equation*}
Q_{n}^{2}+2(Q_{n+1}+Q_{n+2})Q_{n+3}=Q_{n+3}^{2}+(Q_{n+1}+Q_{n+2})^{2}.
\end{equation*}%
Thus 
\begin{eqnarray*}
(Q_{n}^{2}+2(Q_{n+1}+Q_{n+2})Q_{n+3})^{2}
&=&Q_{n+3}^{4}+(Q_{n+1}+Q_{n+2})^{4}+2((Q_{n+1}+Q_{n+2})Q_{n+3})^{2} \\
&=&(Q_{n+3}^{2}-(Q_{n+1}+Q_{n+2})^{2})^{2}+(2(Q_{n+1}+Q_{n+2})Q_{n+3})^{2}.
\end{eqnarray*}

Here 
\begin{eqnarray*}
(Q_{n+3}^{2}-(Q_{n+1}+Q_{n+2})^{2})^{2}
&=&(Q_{n+3}-(Q_{n+1}+Q_{n+2}))^{2}+(Q_{n+3}+(Q_{n+1}+Q_{n+2}))^{2} \\
&=&Q_{n}^{2}Q_{n+4}^{2}.
\end{eqnarray*}%
Substituting this gives the result.

\subsection{\protect\bigskip Proof of the Identity 6:}

For $n\geq 2,$ we have

\begin{eqnarray*}
Q_{n}^{2}-Q_{n-1}^{2} &=&(Q_{n}+Q_{n-1})(Q_{n}-Q_{n-1}) \\
&=&[(T_{n}+T_{n-1})+\mathbf{i}(T_{n+1}+T_{n})+\mathbf{j}(T_{n+2}+T_{n+1})+%
\mathbf{k}(T_{n+3}+T_{n+2})]\times \\
&&\lbrack (T_{n}-T_{n-1})+\mathbf{i}(T_{n+1}-T_{n})+\mathbf{j}%
(T_{n+2}-T_{n+1})+\mathbf{k}(T_{n+3}-T_{n+2})] \\
&=&[(T_{n}+T_{n-1})+\mathbf{i}(T_{n+1}+T_{n})+\mathbf{j}(T_{n+2}+T_{n+1})+%
\mathbf{k}(T_{n+3}+T_{n+2})]\times \\
&&\lbrack (T_{n-2}+T_{n-3})+\mathbf{i}(T_{n-1}+T_{n-2})+\mathbf{j}%
(T_{n}+T_{n-1})+\mathbf{k}(T_{n+1}+T_{n})] \\
&=&\tilde{U}_{n+1}\tilde{U}_{n-1}.
\end{eqnarray*}

\subsection{Proof of the Identities 7:}

\bigskip We will show the identity 
\begin{equation*}
\sum\limits_{k=0}^{n}Q_{k}=\frac{Q_{n+2}+Q_{n}+Q_{0}-Q_{2}}{2}.
\end{equation*}%
The others can be done similarly. The proof can be done by induction on $n.$
For $n=0$ we have 
\begin{equation*}
Q_{0}=\frac{Q_{2}+Q_{0}+Q_{0}-Q_{2}}{2}.
\end{equation*}%
So equality holds for $n=0.$ Assume it is true for $n=m,$ that is, 
\begin{equation*}
\sum\limits_{k=0}^{m}Q_{k}=\frac{Q_{m+2}+Q_{m}+Q_{0}-Q_{2}}{2}.
\end{equation*}

For $n=m+1,$ we have 
\begin{equation*}
\sum\limits_{k=0}^{m+1}Q_{k}=\sum\limits_{k=0}^{m}Q_{k}+Q_{m+1}.
\end{equation*}%
By induction hypothesis we can write%
\begin{eqnarray*}
\sum\limits_{k=0}^{m}Q_{k}+Q_{m+1} &=&\frac{Q_{m+2}+Q_{m}+Q_{0}-Q_{2}}{2}%
+Q_{m+1} \\
&=&\frac{Q_{m+2}+Q_{m}+Q_{0}-Q_{2}+2Q_{m+1}}{2} \\
&=&\frac{Q_{m+2}+Q_{m+1}+Q_{m}+Q_{m+1}+Q_{0}-Q_{2}}{2} \\
&=&\frac{Q_{m+3}+Q_{m+1}+Q_{0}-Q_{2}}{2}.
\end{eqnarray*}

Hence we obtain that 
\begin{equation*}
\sum\limits_{k=0}^{m+1}Q_{k}=\frac{Q_{m+3}+Q_{m+1}+Q_{0}-Q_{2}}{2}.
\end{equation*}

This shows that equality holds for all $n\geq 0.$

\section{A Note on the Tribonacci Quaternions}

We consider the Tribonacci and Tribonacci-Lucas quaternions. These
quaternions can be written as 
\begin{eqnarray*}
Q_{n} &=&T_{n}+A\text{ } \\
\tilde{Q}_{n} &=&K_{n}+B
\end{eqnarray*}%
where $A=\func{Im}Q_{n}$ and $B=\func{Im}\tilde{Q}_{n}.$ Let 
\begin{equation*}
\mathcal{QM=\{}Q_{n}:Q_{n}\text{ is the }n^{th}\text{ Tribonacci quaternion}%
\}
\end{equation*}%
and $\mathcal{M}$ is the set of $2\times 2$ matrices with entries from $%
%TCIMACRO{\U{2102} }%
%BeginExpansion
\mathbb{C}
%EndExpansion
$ of the form:%
\begin{equation*}
\mathcal{M=}\left\{ X_{n}:X_{n}=%
\begin{bmatrix}
z & -w \\ 
\overline{w} & \overline{z}%
\end{bmatrix}%
\text{ ; }z,w\in 
%TCIMACRO{\U{2102} }%
%BeginExpansion
\mathbb{C}
%EndExpansion
\right\} .
\end{equation*}%
Then each matrix can be decomposed into a vector space representation with
four basis elements. Let $\Phi $ be the following map:%
\begin{eqnarray*}
\Phi &:&\mathcal{QM\rightarrow M} \\
Q_{n} &\mapsto &X_{n}=%
\begin{bmatrix}
T_{n}+iT_{n+1} & -T_{n+2}-iT_{n+3} \\ 
T_{n+2}-iT_{n+3} & T_{n}-iT_{n+1}%
\end{bmatrix}%
.
\end{eqnarray*}%
Then it can be easily show that $\Phi $ is an isomorphism. Thus we can write 
\begin{equation*}
X_{n}=T_{n}E+T_{n+1}I+T_{n+2}J+T_{n+3}K
\end{equation*}%
where 
\begin{equation*}
E=%
\begin{bmatrix}
1 & 0 \\ 
0 & 1%
\end{bmatrix}%
,\text{ }I=%
\begin{bmatrix}
i & 0 \\ 
0 & -i%
\end{bmatrix}%
,\text{ }J=%
\begin{bmatrix}
0 & -1 \\ 
1 & 0%
\end{bmatrix}%
,\text{ }K=%
\begin{bmatrix}
0 & -i \\ 
i & 0%
\end{bmatrix}%
.
\end{equation*}%
Since $\det (X_{n})\neq 0,$ $X_{n}$ is an invertible matrix and its inverse
is in $\mathcal{M}$.

\end{document}